\theoremstyle{plain}
 \newtheorem{thm}{Theorem}[section]
 \numberwithin{equation}{section} 
 \numberwithin{figure}{section} 
 \theoremstyle{remark}
 \newtheorem*{acknowledgement*}{Acknowledgement}
 \theoremstyle{plain}
 \theoremstyle{plain}
 \theoremstyle{plain}
 \newtheorem{lem}[thm]{Lemma} 
 \theoremstyle{definition}
 \theoremstyle{plain}
 \newtheorem{prop}[thm]{Proposition} 
 \theoremstyle{plain}
 \newtheorem*{thm*}{Theorem}
\newcommand{\hide}[1]{}
\newcommand{\ZZ}{\mathbb{Z}}
\newcommand{\NN}{\mathbb{N}}
\newcommand{\ZD}{{\mathbb{Z}^d}}
\newcommand{\htop}{h_{\mathit{top}}}
\newcommand{\TSD}{time-space diagram}
\begin{document}

\title{Growth-type invariants for $\ZD$ subshifts of finite type and classes arithmetical of real numbers}
\author{Tom Meyerovitch}
\begin{abstract}
We discuss some numerical invariants of multidimensional shifts of
finite type (SFTs) which are associated with the growth rates of the
number of admissible finite configurations. Extending an unpublished
example of Tsirelson \cite{tsirelson_92}, we show that growth
complexities of the form $\exp(n^\alpha)$ are possible for
non-integer $\alpha$'s. In terminology of \cite{carvalho_97}, such
subshifts have entropy dimension $\alpha$. The class of possible
$\alpha$'s are identified in terms of arithmetical classes of real
numbers of Weihrauch and Zheng \cite{ZW01}.
\end{abstract}

\maketitle
\section{Introduction}
A \emph{multidimensional shift space}, or a \emph{$\ZD$-subshift},
is a subset of $S^\ZD$, defined by a (possibly infinite) list of
translations invariant local rules. When there is a finite list of
rules defining $X$, it is a \emph{shift of finite type (SFT)}. We
always assume $S$ is a finite set, and refer to it as the
\emph{alphabet} or \emph{set of tiles} of the subshift.

Symbolic dynamics is concerned with the study of subshifts, and in
particular SFTs. Apart from the pure mathematical interest in these
objects, they come up as natural models in various fields such as
statistical mechanics and computer science.

There is a very sharp contrast in the behavior of SFTs in dimension
$d=1$ with respect to higher dimensions $d>1$. Quoting Klaus Schmidt
\cite{schmidt_76}: ``$\ldots$ Higher dimensional Markov shifts (is)
a difficult field of research with no indication yet of satisfactory
general results. This lack of progress is all the more remarkable
when compared with the richness of the theory in one dimension.'' In
this paper we attempt to bridge a certain aspect of our
understanding of multidimensional SFTs, in view of recent progress
in this field. Namely, we study asymptotic growth properties of
SFTs. A step in this direction was taken in \cite{HM_SFT}, where
there most classical growth type -- exponential -- was investigated.
We discover that when studying various growth types, different
arithmetical classes of real number come up.  As SFTs are seemingly
reasonable models for ``physical systems'', their growth-type
invariant can be considered ``physical constants''. Our present
result of  is a characterization of the possible
``broken-exponential'' asymptotic growth rates of SFTs- namely those
$\alpha$'s for which the number of admissible $n$-blocks grows
exponentially in $n^\alpha$. In particular, it turns out that there
are cases where these constants can not be obtained as a limit of
any computable sequence.

\textbf{Acknowledgments:} This work is a part of the author's Ph.D
thesis, written in Tel-Aviv University under the supervision of
Professor Aaronson. The author acknowledges the support of support
of the Crown Family Foundation Doctoral Fellowships ,USA. The author
thanks Professor Tsirelson for making his unpublished result of
\cite{tsirelson_92} available to him.

\section{Growth-type invariants for subshifts}

Let $X$ be a $\ZD$ subshift. For $k \ge 1$, denote by $N_k(X)$ the
number of $X$-admissible $k \times k$ configurations. The sequence
$\{N_k(X)\}_{k \ge 1}$ by itself is not an invariant of SFT
isomorphism.

A function of the form
$$I(X) = \limsup_{k \to \infty}f_k(N_k(X)),$$
or
$$I(X) = \liminf_{k \to \infty}f_k(N_k(X)),$$

where $\{f_k\}$ is a sequence of real valued functions of the
positive-integers, is called a \emph{growth-type invariant}, if
$I(X) = I(Y)$ for any topologically conjugate $\ZD$ subshifts $X$
and $Y$.

The fundamental example of a growth-type invariant is the
\emph{topological entropy}, defined by
$$\htop(X)=\lim_{k \to \infty}\frac{\log N_k(X)}{k^d}.$$

It is natural to investigate the possible asymptotic behaviors of
$N_k(X)$  where $X$ is a subshift of finite type. A result of this
type was obtained in \cite{HM_SFT}: The possible values for the
topological entropy of $\ZD$-SFTs is precisely the class of
non-negative right recursively enumerable numbers.

For a perspective, lets discuss the much simpler situation in $d=1$. 
Any one-dimensional SFT $X$ can be represented by a non-negative
integer valued matrix $A$.
The numbers $N_k(X)$ are expressed by the sum of the entries of
$A^k$. If the spectral radius of $A$ is strictly greater then $1$,
$N_k(X)$ grows exponentially, at a rate determined by the spectral
radius of $A$, which can  be any Perron number, as shown by Lind
\cite{lind84}. Otherwise, the spectral radius of $A$ is equal to
$1$, and so $N_k(X)$ is a polynomial in $k$. For any integer $N$, it
is easy to construct a $1$-dimensional subshift with $N_k(X)$ a
polynomial or degree $N$.  ``Intermediate growth'' and ``non-integer
polynomial degree'' are impossible for SFTs in dimension $1$.

 Here are a some growth-type invariants which will be
studied in this paper:

\begin{itemize}
\item{
The upper and lower \emph{entropy dimensions} of a  subshift $X
\subset \Sigma^{\ZD}$ are:
\begin{equation}\label{eq:upper_entropy_dimension} \overline{D}(X)=
\limsup_{k \to \infty} \frac{\log \left(\log N_k(X) \right)}{\log
k}\end{equation} and
\begin{equation}\label{eq:lower_entropy_dimension}\underline{D}(X)= \liminf_{k \to
\infty} \frac{\log \left(\log N_k(X) \right)}{\log k}.\end{equation}

When there is equality of the upper and lower entropy dimension, we
say that $X$ has entropy dimension $D(X)$, in which case:
$$D(X)= \lim_{k \to \infty} \frac{\log \left(\log N_k(X) \right)}{\log
k}.$$ }
\item{
The (upper/ lower) \emph{polynomial growth type} of an SFT $X$, are:
$$\overline{P}(X)= \limsup_{k \to \infty} \frac{\log N_k(X)}{\log
k},~ \underline{P}(X)= \liminf_{k \to \infty} \frac{\log
N_k(X)}{\log k},$$

and $$P(X)=\lim_{k \to \infty} \frac{\log N_k(X)}{\log k},$$  when
there is a limit.}
\end{itemize}

The above quantities are indeed invariants:
\begin{lem}\label{lem:entropy_dim_inv}
The upper and lower entropy dimensions  and polynomial growth types
are all invariants of topological congruency.
\end{lem}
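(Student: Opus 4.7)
The plan is to use the multidimensional Curtis-Hedlund-Lyndon theorem, which says that any topological conjugacy $\phi : X \to Y$ between $\mathbb{Z}^d$ subshifts is a sliding block code: there exists $r \ge 0$ (a common coding radius for $\phi$ and $\phi^{-1}$) and a local rule such that $(\phi x)_v$ depends only on the entries of $x$ indexed by $v + [-r,r]^d$. From this, the central combinatorial observation is that each $X$-admissible $(k+2r) \times (k+2r)$ configuration determines, via $\phi$, a $Y$-admissible $k \times k$ configuration, and the resulting map on patterns is surjective onto the $Y$-admissible $k\times k$ configurations. The symmetric statement holds with the roles of $X$ and $Y$ reversed. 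Consequently,
\begin{equation*}
N_k(Y) \;\le\; N_{k+2r}(X) \qquad \text{and} \qquad N_k(X) \;\le\; N_{k+2r}(Y).
\end{equation*}

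Once these two inequalities are in hand, the rest is an asymptotic comparison. For the upper entropy dimension, take the first inequality, apply $\log\log$, divide by $\log k$, and write
\begin{equation*}
\frac{\log\log N_k(Y)}{\log k} \;\le\; \frac{\log\log N_{k+2r}(X)}{\log(k+2r)} \cdot \frac{\log(k+2r)}{\log k}.
\end{equation*}
Since $\log(k+2r)/\log k \to 1$ as $k \to \infty$, taking $\limsup$ on both sides gives $\overline{D}(Y) \le \overline{D}(X)$. Reversing the roles gives the opposite inequality, so $\overline{D}(X) = \overline{D}(Y)$. Taking $\liminf$ in place of $\limsup$ yields $\underline{D}(X) = \underline{D}(Y)$.

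The polynomial growth types are handled by the identical argument, applying only one $\log$ instead of $\log\log$:
\begin{equation*}
\frac{\log N_k(Y)}{\log k} \;\le\; \frac{\log N_{k+2r}(X)}{\log(k+2r)} \cdot \frac{\log(k+2r)}{\log k},
\end{equation*}
and the same $\limsup / \liminf$ comparison gives $\overline{P}(X) = \overline{P}(Y)$ and $\underline{P}(X) = \underline{P}(Y)$.

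There is no real obstacle here; the only substantive ingredient is the multidimensional Curtis-Hedlund-Lyndon theorem. The point worth emphasizing in the write-up is that $r$ can be chosen simultaneously as a coding radius for $\phi$ and $\phi^{-1}$, so that both inequalities $N_k(X) \le N_{k+2r}(Y)$ and $N_k(Y) \le N_{k+2r}(X)$ use the same shift $2r$; the correction factor $\log(k+2r)/\log k \to 1$ is what makes $\log\log$ (respectively $\log$) divided by $\log k$ insensitive to this bounded additive perturbation of the index.
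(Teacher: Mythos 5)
Your proof is correct and is essentially the same argument as the paper's: a conjugacy and its inverse are block maps with a common coding radius, giving $N_k(Y)\le N_{k+C}(X)$ and $N_k(X)\le N_{k+C}(Y)$ for a fixed constant $C$, and the comparison then follows from $\log(k+C)/\log k\to 1$ after taking $\limsup$ or $\liminf$. The only cosmetic difference is that you phrase the bounded shift as $2r$ from the Curtis--Hedlund--Lyndon radius while the paper uses a $K$-block map and writes $N_{k-K}(X)\le N_k(Y)\le N_{k+K}(X)$.
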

\begin{proof}
  Suppose $\phi:X \to  Y$ is a congruency of $X$ and $Y$. We can assume that both $\phi$ and $\phi^{-1}$ are given by  $K$-block maps.
 Under this assumption, for every $k \ge K$,
 $$N_{k-K}(X) \le N_{k}(Y) \le N_{k+K}(X).$$

 Since $\lim_{k \to \infty}\frac {\log( k+K)}{\log k}=1$, it follows that
 $$\overline{P}(X)=
  \limsup_{k \to \infty}\frac{ \log N_{k -K}(X)}{\log k} \le \overline{P}(Y).$$
 Replacing the roles of $X$ and $Y$, it follows that $\overline{P}(Y) \le \overline{P}(X)$.

 The same sort of argument proves that the lower polynomial growth rates,
  the upper entropy dimension and the lower entropy dimension are all invariants of topological congruency.
\end{proof}

\section{\label{sec:examples}Constructions and illustrative examples of subshifts}
In this section we describe families of subshifts having various
growth-type properties. Our main construction scheme, described in
the next section, is a construction which combines ingredients from
the SFTs described in this section.


\subsection{\label{subsec:p_adic_SFTs}Robinson-type $p$-adic SFTs}
We describe a family of $\ZZ^2$-SFTs, which are topologically almost
$\ZZ^2$ $p$-adic odometers. This construction 
is a variant of Robinson's. 

 We
denote the SFT described in \cite{robinson71} by $X_2$. For this
subshift, the alphabet consists of all possible reflections and
rotations of the $5$ tiles shown in figure \ref{fig:robinson_SFT},
along with extra parity markings in $(\ZZ / 2\ZZ) ^2$.  The left
most tile in figure \ref{fig:robinson_SFT} is called a \emph{cross},
and the other $4$ tiles are called \emph{arms}. There are $4$
possible orientations for a cross, which called $SW$, $SE$, $NE$ and
$NW$. The basic restriction on these tiles is that any arrow head
must meet any arrow tail. The restrictions on the parity tiles are
that if $x_{i,j}$ has parity $(n,m)$ then $x_{i+1,j}$ and
$x_{i,j+1}$ have parity markings $(n+1,m)$ and $(n,m+1)$
respectively, and that parity $(0,0)$ can appear only on a cross.

\begin{figure}\label{fig:robinson_SFT}
\includegraphics[width=0.8\textwidth,clip]{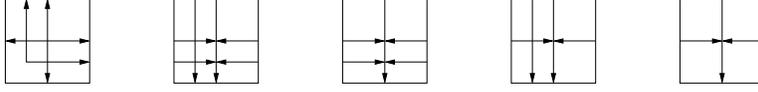}
\caption{The basic tiles of Robinson's SFT}
\end{figure}

It was proved in \cite{robinson71} that this subshift admits no
periodic points. Furthermore it is almost a $\ZZ^2$ $2$-adic
Odometer- in a sense explained in the following.

We now describe a base $p$-analog of this construction for a natural
number $p
>2$.  Let $(p):=\ZZ / p\ZZ$ denote the
cyclic group of order $p$ and $\ZZ_p$ denote the $p$-adic integers.
We define an SFT $X_p \subset S_p^{\ZZ^2}$ as follows: The alphabet
$S_p$ will consist of \emph{nodes} which are elements of $\{B,W\}
\times \left((p) \setminus \{1\}\right)^2$, and of \emph{arrows}
which are elements in $\{\uparrow,\rightarrow\} \times (p)^2$.

The adjacency rules of $X_p$ are the following: For any  $x \in
X_p$,
\begin{enumerate}
  \item{ If $x_{i,j} = (B,n,m)$, and $n \ne 0$,  then $x_{i + 1, j}= (B, n
+ 1,m)$.}
\item{ If $x_{i,j}=(B,0,m)$ then
$x_{i+1,j}=(\uparrow, n',m')$ for some $n',m' \in (p)$.}
\item{If $x_{i,j} = (B,n,m)$, and $m \ne 0$,  then $x_{i , j +1}= (B, n
,m+1)$.}

\item{If $x_{i,j}=(B,n,0)$ then
$x_{i,j+1}=(\rightarrow, n',m')$ for some $n',m' \in (p)$. }
\item{ If $x_{i,j}=(B,0,m)$ then
$x_{i+2,j}=(B,2,m)$.}
\item{If $x_{i,j}=(B,n,0)$ then $x_{i,j+2}=(B,n,2)$.}
\item{If $x_{i,j}=(W,n,m)$ then $x_{i+1,j}=(\rightarrow,n,m)$}
\item{If $x_{i,j}=(W,n,m)$ then $x_{i,j+1}=(\uparrow,n,m)$}
\item{If $x_{i,j}=(\uparrow,n,m)$ and $m \ne 0 \mod p$ then either $x_{i,j+1}=(\uparrow,n,m)$
or $x_{i,j+1}=(W,n,m+1)$.}
\item{If $x_{i,j}=(\uparrow,n,0)$ then either $x_{i,j+1}=(\uparrow,n,0)$
or $x_{i,j+1}=(\rightarrow,n',0)$ for some $n' \in (p)$ and
$x_{i,j+2}=(\uparrow,n,1)$.}
\item{If $x_{i,j}=(\rightarrow,n,m)$ and $n \ne 0 \mod p$ then either $x_{i+1,j}=(\rightarrow,n,m)$
or $x_{i,j+1}=(W,n+1,m)$.}
\item{If $x_{i,j}=(\rightarrow,0,m)$ then either $x_{i+1,j}=(\rightarrow,0,m)$
or $x_{i+1,j}=(\uparrow,n',m')$ for some $n',m' \in (p)$ and
$x_{i+2,j}=(\rightarrow,1,m)$.}
\item{If $x_{i,j}=(\uparrow,n,m)$ then $x_{i+1,j}=(B,n',l)$ or $x_{i+1,j}=(\rightarrow,n,m')$ for $n',m',l
\in (p)$.}
\item{If $x_{i,j}=(\rightarrow,n,m)$ then $x_{i,j+1}=(B,l,m')$ or $x_{i,j+1}=(\uparrow,m')$ for $l,m'
\in (p)$.}
\end{enumerate}

Consider a $p\times p$ admissible configuration in $X_p$ whose
bottom-left corner is $(B,0,0)$. Such a configuration is called a
\emph{$1$-cluster}. The labeling of a $1$-cluster is
defined to be the labeling of position $(1,1)$. For example with
$p=2$, omitting the labels on the arrows and the $B/W$ markings, a
level-$1$ cluster with labeling $t_1$ looks like this:
\begin{center}
\begin{tabular}{||c|c|c||}
  \hline\hline
 (0,2) & $\uparrow$ & (2,2)\\\hline
$\rightarrow$ & $t_1$ & $\rightarrow$ \\\hline
(0,0) & $\uparrow$ & (2,0) \\
  \hline\hline
\end{tabular}
\end{center}

Inductively, an \emph{$n$-cluster} is a $p^n\times p^n$ configuration
in $X_p$, whose lower-left $p^{n-1}\times p^{n-1}$ sub-configuration
is an $(n-1)$-cluster with labeling $(W,0,0)$ or $(B,0,0)$. For
example, here is a $2$-cluster, again with $p=3$:

\begin{center}
\begin{tabular}{||c|c|c||c|c|c||c|c|c||}
\hline \hline (0,2) & $\uparrow$ & (2,2) & (0,2) & $\uparrow$ &
(2,2) & (0,2) & $\uparrow$ & (2,2)\\
 \hline $\rightarrow$ & (0,2) &
$\rightarrow$ & $\rightarrow$ & $\uparrow$ & $\rightarrow$ &
$\rightarrow$ & (2,2) & $\rightarrow$\\ \hline

(0,0) & $\uparrow$ & $(2,0)$ &  (0,0) & $\uparrow$ & (2,0) &  (0,0) & $\uparrow$ & (2,0)\\
\hline \hline (0,2) & $\uparrow$ & (2,2) & (0,2) & $\uparrow$ &
(2,2) & (0,2) & $\uparrow$ & (2,2)\\
 \hline
 $\rightarrow$ & $\rightarrow$ &
$\rightarrow$ & $\rightarrow$ & $t_2$ & $\rightarrow$ &
$\rightarrow$ & $\rightarrow$ & $\rightarrow$\\ \hline (0,0) &
$\uparrow$ &
$(2,0)$ & (0,0) & $\uparrow$ & (2,0) &  (0,0) & $\uparrow$ & (2,0)\\
\hline \hline (0,2) & $\uparrow$ & (2,2) & (0,2) & $\uparrow$ &
(2,2) & (0,2) & $\uparrow$ & (2,2)\\ \hline $\rightarrow$ & (0,0) &
$\rightarrow$ & $\rightarrow$ & $\uparrow$ & $\rightarrow$ &
$\rightarrow$ & (2,0) & $\rightarrow$\\ \hline
(0,0) & $\uparrow$ & $(2,0)$ &  (0,0) & $\uparrow$ & (2,0) &  (0,0) & $\uparrow$ & (2,0)\\
\hline\hline
\end{tabular}
\end{center}
$n$-clusters appear in any $x \in X_p$, and determine a
$p$-adic structure. A more precise statement is the following:

\begin{prop}
  As a topological $\ZZ^2$ system, the SFT $X_p$ described above is
  ``almost'' conjugate to the action of $\ZZ^2$ on $\ZZ_p^2$ by
  $T_{n,m}(x,y)=(x+n,y+m)$, with respect to the standard compact topology on $\ZZ^2_p$.
   Namely, there exist a factor map 
  $\pi:X_p \to \ZZ_p^2$, 
  which is injective on a dense orbit
  in $X_p$, and is at most $2p^2$-to-one.
\end{prop}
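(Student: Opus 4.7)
The plan is to read off the $p$-adic coordinate of the origin from the hierarchical cluster structure present in each $x\in X_p$. Concretely, for every $n$ I would first show (by induction on $n$, using the adjacency rules together with the inductive definition of clusters) that there exist offset vectors $v_n(x)\in \ZZ^2/p^n\ZZ^2$ such that the set of bottom-left corners of the $n$-clusters occurring in $x$ is precisely the coset $v_n(x)+(p^n\ZZ)^2$, and that these offsets are consistent: $v_{n+1}(x)\equiv v_n(x)\pmod{p^n}$. This gives a coherent sequence defining an element $\pi(x):=\lim_n v_n(x)\in \ZZ_p^2$.

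Once $\pi$ is defined I would verify the easy properties: continuity is immediate because determining $v_n(x)$ only requires inspecting a bounded window (any $p^n\times p^n$ patch of $x$ meets a unique $n$-cluster once $n$-clusters exist everywhere, and their positions modulo $p^n$ are detected locally by the $B/W$ and arrow labels). Shift equivariance $\pi(\sigma^{(a,b)}x)=\pi(x)+(a,b)$ is clear from the definition, and surjectivity onto $\ZZ_p^2$ follows by compactness, consistently choosing cluster interiors at each hierarchical level.

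The substantive step is the multiplicity bound. The naive hierarchy may fail to be unique precisely when $x$ contains an infinite horizontal or vertical ``fault line'' of arrow tiles not nested inside any $n$-cluster for arbitrarily large $n$; this is exactly the $p$-adic analog of the ambiguity in Robinson's original construction. I would show that such faults can occur only along one bi-infinite row and one bi-infinite column (by the parity/arrow rules, two parallel faults would force an impossible configuration), and that along each fault the configuration on the remaining half-planes is determined up to a choice in $(p)^2$, while the intersection of the two faults contributes one further factor of $2$ (the two sides of the cross). Accounting for the two orthogonal fault directions and the two $(p)^2$ choices then yields the bound $|\pi^{-1}(v)|\le 2p^2$.

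Finally, for the dense orbit on which $\pi$ is injective: the set of $x\in X_p$ with no fault line is a $\sigma$-invariant $G_\delta$ set, and one can exhibit a single $x$ with no faults by choosing a generic $v\in \ZZ_p^2$ all of whose translates are fault-free, whose shift orbit is dense by minimality of the odometer on $\ZZ_p^2$. The main obstacle is the combinatorial case analysis in the multiplicity step, in particular verifying that a second pair of fault lines cannot coexist and that the degeneracy contribution at each level is actually bounded rather than merely finite.
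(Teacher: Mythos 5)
Your proposal is correct and follows essentially the same route as the paper: you read off the $p$-adic digits of the origin's position from the nested cluster structure (the paper's $(i_k,j_k)$, your $v_n(x)$ mod $p^n$), check continuity, equivariance and surjectivity, and bound the fibers by observing that $x$ is determined except along at most one horizontal and one vertical fault line, whose decorations contribute the factor $2p^2$ --- exactly the paper's analysis of the exceptional coordinates $m=r'$, $n=s'$. The only soft spot is the dense-orbit step: minimality of the odometer gives density of the orbit of $\pi(x)$ in $\ZZ_p^2$, not of the orbit of $x$ in $X_p$ (one still needs that every admissible patch, including those straddling a fault, occurs in the fault-free point), but the paper itself leaves this point implicit as well.
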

\begin{proof}
For $p=2$ a proof is given in section $8$ of Robinson's paper
\cite{robinson71}. For $p > 2$, we 
describe an almost $1-1$ factor map $\pi: X_p \mapsto \ZZ_p^2$: Let
$x \in X_p$. By a straightforward case study of the rules above, it
follows that there exists unique indices $(i_0,j_0) \in (p)^2$ such
that $x_{n,m}=(B,n+i_0,m+i_0)$ whenever $n+i_0 \ne 1 \mod p$ and
$m+i_0 \ne 1 \mod p$. Proceeding by induction, one verifies that for
any $k \ge 1$, there are $(i_k,j_k) \in (p)^2$ such that
$$x_{p^kn-r_k+\sum_{j=0}^{k-1}p^j,p^km-s_k+\sum_{j=0}^{k-1}p^j}=(W,n+i_k,m+j_k),$$
 whenever $n+i_k \ne 1 \mod
p$ and $m+j_k \ne 1 \mod p$, where $r_k= \sum_{t=0}^{k-1}i_t$ and
$s_k= \sum_{t=0}^{k-1}j_t$.

Let $r_\infty = \sum_{k=0}^{\infty}p^ki_k
\in \ZZ_p$, $s_\infty = \sum_{k=0}^{\infty}p^kj_k \in \ZZ_p$.

Define: 
$$\pi(x):=(r_\infty,s_\infty)$$

For any $k$ the pair $(r_k,s_k)$ can obtain any value in
$(p^k)^2$. This can be seen by examining all possible
$k$-clusters in a given $(k+1)$-cluster. Thus, the map $\pi$ is
onto $\ZZ_p^2$. Also ,
shifting $x$ by $(m,n) \in \ZZ$ results in adding $(m,n)$ to $\pi(x)$, where addition is in $\ZZ_p^2$.
 Since $\pi$ is clearly continuous, it is indeed a topological factor map of $X$ onto $\ZZ_p^2$.

It remains to show that $\pi$ is almost-one to one , and at most
$2p^2$-to-1: Suppose $\pi(x)=(r,s)$. Let $r'=r- \sum_{j=0}^\infty
p^j$, $s'=s- \sum_{j=0}^\infty p^j$. Let $m,n \in \ZZ$. Suppose
$m-r' \ne 0$ let $k_r$ be the smallest integer such that $m-r' \ne 0
\mod p^k_s$. Similarly, if $n-s' \ne 0$ let $k_s$ be the smallest
integer such that $n-s' \ne 0 \mod p^k_r$. From the definition of
$\pi$ it follows that
$$x_{m,n}=\begin{cases}
(B,m-r',n-s') & \mbox{if } k_s=k_r=0\\
(W,(m-r')/p^{k-1},(n-s')/p^{k-1}) & \mbox{if } k_s=k_r=k >0\\
(\uparrow,(m-r')/p^{k-1},(n-s')/p^{k-1}) & \mbox{if } k_r>k_s\\
(\rightarrow,(m-r')/p^{k-1},(n-s')/p^{k-1}) & \mbox{if } k_s>k_r\\
\end{cases}$$

Thus, $x_{m,n}$ is determined unless $n = s'$ or $m = r'$. If $n =
s'$ and $m \ne r'$ then there exists $i_{\infty} \in (p)$ with
$x_{m,n}=(\rightarrow,i_\infty)$ or
$x_{m,n}=(\rightarrow,i_\infty+1)$. If $n \ne s'$ and $m = r'$ then
there exists $j_{\infty} \in (p)$ with $x_{m,n}=(\uparrow,j_\infty)$
or $x_{m,n}=(\uparrow,j_\infty+1)$. $x_{i_\infty,j_\infty}$ is
either $(\uparrow,j_\infty)$ or $(\rightarrow,i_\infty)$. We
conclude that $\pi$ is at most $2p^2$ to 1, and any point $(s,t) \in
\ZZ_p^2$ such that $s'$ and $r'$ are not ordinary integers has a
unique preimage.

\end{proof}

For this subshift, it not difficult to calculate that
$N_{k}(X_p)=k^2\left((p-1)^2+2p\right)$. In particular,
$\htop(X_p)=D(X_p)=0$ and $P(X)=2$.
\subsection{\label{subsec:frac_entrop_dim}Subshifts with fractional entropy dimension}
We will describe a recipe for a $\ZZ^2$-subshift (not of finite
type!) with entropy-dimension $\alpha\in (0,2)$, for an arbitrary
real number $\alpha$ in the range . The main construction, is an
``SFT implementation'' of this type of subshift, for a certain
family of $\alpha$'s.

Given any number $\alpha \in (0,2)$, Choose a sequence $(a_n)_{n
\in\NN} \in \{0,1\}^\NN$ such that $\alpha = \lim_{n \to
\infty}\frac{2}{n}\sum_{k=0}^n a_k$. In other words, $(a_n)_{n
\in\NN}$ is the indicator of a set of integers with asymptotic
density $\alpha$.

Now let $Y_\alpha$ be the following  (non-SFT!) extension of
Robinson's SFT $X_2$: Allow each cross can be blue or white, subject
to the following restrictions:
\begin{itemize}
  \item{Any cross which is inside a cluster with white central cross must be white.}
  \item{The central cross of an $n$-cluster can be blue only if it is labeled $SW$ or if $a_n=1$.}
\end{itemize}

Now allow each blue cross to contain another marking, which can have one of two values. We refer to these values as
 ``head''  and ``tail''. No further restrictions on the head/tail marking of blue crosses are imposed.

We claim that $Y_\alpha$ has entropy dimension $\alpha$. To see this, let $s_n$ denote the number of possibilities for
blue/white and head/tail markings of a $Y_\alpha$-admissible $n$-cluster.
Consider a  $Y_\alpha$-admissible $n$-cluster.

If the central cross is white, then  all the markings are determined.
Otherwise,  the central cross can have head or tail marking and
according to the value of $a_n$, there are either $4$ or $1$
$(n-1)$-clusters to be determined. Thus,

$$s_{n+1} = \begin{cases}
  s_n +1 & \mbox{if }a_n=0\\
  2s_n^4+1 & \mbox{if }a_n=1\\
\end{cases}$$

since $\alpha >0$, it follows that $a_n=1$ for infinitely many $n$'s, and so $s_n \to \infty$.
It follows that for any $\epsilon>0$ and sufficiently large $n$'s:

$$ \begin{cases}
  \log s_n \le \log s_{n+1} \le (1+\epsilon)\log s_n  & \mbox{if }a_n=0\\
  4\log s_n \le \log s_{n+1} \le (4+\epsilon)\log s_{n} & \mbox{if }a_n=1\\
\end{cases}$$

Thus, by taking $\log$'s on both sides, it follows by induction that
for sufficiently large $N$'s, and $n >N$,
$$\log4 \sum_{k=N}^n a_k \le \log \log s_n \le \log(4+\epsilon) \sum_{k=N}^n
a_k,$$ and so:
$$\log \log s_n \sim \log 4 \sum_{k=1}^n a_k.$$

As any admissible $k \times k$ configuration in $Y_\alpha$ contains a $(\lfloor \log_2 k \rfloor -1)$-cluster and is
contained in a $(\lceil \log_2 k \rceil +1)$-cluster, it follows that the entropy dimension of $Y_\alpha$ is
$$D(Y_\alpha) = \lim_{n \to \infty}\frac{\log \log s_n}{n\log 2} = 2 \lim_{n \to \infty}\frac{1}{n}\sum_{k=0}^n a_n.$$

More generally, given $0 \le \alpha_1 \le \alpha_2 \le 2$, we could
have chosen $(a_n)_{n \in\NN}$ to be an indicator of a set with
lower density $\alpha_1$ and upper density $\alpha_2$. In this case,
we could construct a subshift $Y_{\alpha_1,\alpha_2}$ with lower
entropy-dimension $\alpha_1$ and upper entropy-dimension $\alpha_2$.

\subsection{\label{subsec:rat_entropy_dim} SFT's with entropy dimension $\frac{\log q}{\log
p}$:} The following class of examples is a variation on an
unpublished construction of B. Tsirelson \cite{tsirelson_92} for
$\ZZ^2$-SFTs with fractional entropy dimensions.

Lets first present an illustration of the idea : Figure
\ref{fig:tsirelson_SFT} is a $2^4 \times 2^4$ admissible
configuration for Tsirelson's SFT. Ignoring the cross on the upper
right corner, there are $3^0+3^1+3^2$ blue crosses. Similarly, such
a $(2^n-1) \times (2^n-1)$ configuration could contain at most
$\sum_{k=0}^{n-2}3^k$ blue crosses. Allowing each blue cross an
extra mark in $\{\mathit{head},\mathit{tail}\}$ independently, the
entropy dimension is $\frac{\log 3}{\log 4}$.

\begin{figure}\label{fig:tsirelson_SFT}
\includegraphics[width=0.8\textwidth,clip]{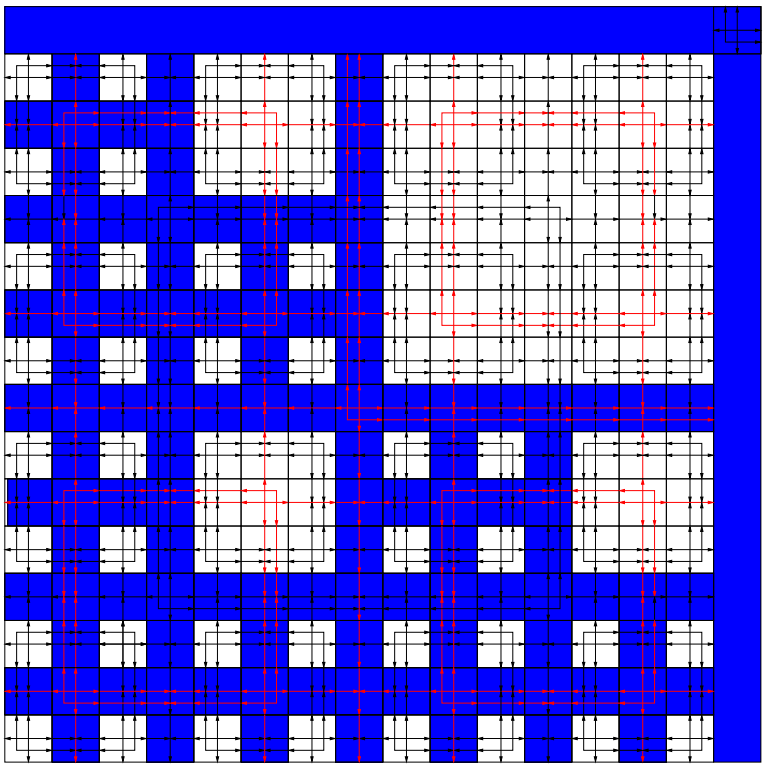}
\caption{Tsirelson's SFT with entropy dimension $\frac{\log 3}{\log 2}$}
\end{figure}

Now for a formal description of a more general scheme: Let $2 < p$
and $0< q < 2p$ be integers. We introduce an extension $Y=Y_{p,q}$
of the SFT $X_p$, which has fractional entropy dimension. The tiles
will be marked with an extra color which can be blue or white,
subject to the restriction specified below.
%

Choose a subsets $$B_h,B_v \subset \{0,2,\ldots,p-1\},$$ with $|B_h|+|B_v|=q$. Impose
the restriction that a node $(W,x,y)$ can be marked blue iff $x \in B_h$ and $y
\in B_v$. Further impose the restriction that arrows originating at a
blue node must be marked blue, and that whenever an arrow head meets
an arrow tail either both must be blue or both not blue. Finally,
impose the restriction that whenever a blue arrowhead meets a
perpendicular arrow, this arrow must also be blue.

It follows that a node can be blue only if the central node of any
cluster containing this node is indexed by $(x,y) \in A^2$. Thus,
the number of blue nodes in an $n$-cluster is at most
$b_n=\sum_{k=1}^n q^k$, so $\log b_n \sim n\log q$. Now we allow an
extra marking in $\{0,1\}$ for each blue nodes, without any further
restrictions. There will be in total $s_n= p^2\sum_{j=1}^{b_n}2^j$
$n$-clusters, and so $\log \log s_n \sim \log b_n \sim  n \log q$.
Since any admissible $k\ \times k$ configuration in $Y$ is contained
in a $(\log_p k +1)$-cluster, it follows that $$\log \log N_k(Y)
\sim \log \log s_{\log_p k} \sim  \frac{\log q}{\log (p-1)} \log
k.$$ Thus, $Y$ has entropy dimension $\log q / \log (p-1)$.

%
\subsection{\label{subsec:example_polynomial_growth}SFT with polynomial growth type $2+\frac{\log M}{\log p}$}

A different kind of extension for the SFT $X_p$ defined in
\ref{subsec:p_adic_SFTs}  will yield an SFT $X_{p,M} \subset
\left(S_p \times M\right)^{\ZZ^2}$ which has polynomial growth-type
$2+\frac{\log M}{\log p}$ for any non-negative integer $M$. The
local restrictions will guarantee that the extra marking in $(M)$ is
constant within each level, and independent between levels: A white
node has the same $(M)$-color as the arrows pointing towards it and
away from it, and any arrow has the same $(M)$-marking as its
predecessor (at distance $1$ or $2$). An $n$-cluster will have $M^n$
possible $(M)$-markings, so $\log N_{p^k}(X_{p,T}) \sim k(\log M + 2
\log p) $.

It follows that $P(X_{p,m})=2+\frac{\log M}{\log p}$.

\section{The Zheng-Weihrauch hierarchy of real numbers}
In order to state our main result, we briefly recall from
\cite{ZW01} Weihrauch and Zheng's hierarchy of real numbers. Denote
by $\Gamma_{\mathbb{Q}}$ the set of computable functions
$f:\mathbb{N}^k \to \mathbb{Q}$ (there exist a Turing machine
$\mathcal{R}_f$ which terminates with output $f(n_1,\ldots,n_k)$,
given the input $(n_1,\ldots,n_k) \in \mathbb{N}^k$). For $n \ge 1$,
the classes of real numbers $\Sigma_n,\Pi_n$ and $\Delta_n$ are
defined as follows:
\begin{enumerate}
\item{
$$\Sigma_n :=\{x \in \mathbb{R}:~
(\exists f \in \Gamma_\mathbb{Q})x=\sup_{i_1}\inf_{i_2}\ldots
\Theta_{i_n}f(i_1,\ldots,i_n)\}$$
$$\Pi_n :=\{x \in \mathbb{R}:~
(\exists f \in \Gamma_\mathbb{Q})x=\inf_{i_1}\sup_{i_2}\ldots
\overline{\Theta}_{i_n}f(i_1,\ldots,i_n)\}$$}
\item{$\Delta_n := \Sigma_n \cap \Pi_n$}.
\end{enumerate}
In the above $\Theta$ and $\overline{\Theta}$ stand for $\sup$ or
$\inf$ according to the parity of $n$.

It is known that for any $n \ge 0$, the inclusions $\Pi_n \subset
\Sigma_{n+1}$ and $\Sigma_n \subset \Pi_{n+1}$ are proper.

In particular, we will be interested in the first few classes in
this hierarchy, and some of their equivalent characterizations
proved in \cite{ZW01}:
\begin{itemize}
  \item{$x\in \Sigma_1$ ($\Pi_1$) iff $x=\sup_{n \in \NN}f(n)$ ($x=\inf_{n \in \NN}f(n)$)}
  \item{$x \in \Sigma_2$ ($\Pi_2$) iff $x=\sup_{n \in \NN}\inf_{m
  \in \NN}g(n,m)$ ($x=\inf_{n \in \NN}\sup_{m
  \in \NN}g(n,m)$) iff $x=\liminf_{n \to \infty}f(n)$ ($x=\limsup_{n \to
  \infty}f(n)$).}
  \item{$x \in \Delta_1$ iff $x=\lim_{n \to \infty}f(n)$
  \emph{effectively}. This means $|f(n)-x| \le 2^{-n}$.}
  \item{$x \in \Delta_2$ iff $x=\lim_{n \to \infty}f(n)$.}
\end{itemize}

 where $f:\NN \to
\mathbb{Q}$ and $g:\NN^2 \to \mathbb{Q}$ are computable functions.

This arithmetical hierarchy of real numbers recently already made an
appearance in symbolic dynamics. The main result of \cite{HM_SFT} is
that for $d \ge 2$, the class of entropies of $\ZD$ SFT's is
$[0,+\infty) \cap \Pi_1$. This was followed by a subsequent result
of Hochman \cite{hochman1}: For $d \ge 3$, the class of entropies of
$\ZD$ cellular automata is $[0,+\infty] \cap \Delta_2$.

For our purposes, it will be convenient to associate a binary
function with a real number in $\Pi_3$ (and $\Delta_2$, $\Sigma_2$):
\begin{lem}\label{lem:binary_avg_comp}
\begin{enumerate}
  \item{ $x \in [0,1]$ is in $\Pi_3$ iff $x$ is the
upper-density for some set of integers  whose complement is
recursively enumerable. In other words,
  there exists a computable function $g:\mathbb{N} \times  \mathbb{N} \to  \{0,1\}$ such that
  $$x=\limsup_{n \to \infty}
  \frac{1}{n}\sum_{k=1}^n\inf_{j} g(k,j).$$}
  \item{$y \in [0,1]$ is in $\Sigma_2$ iff it is the
  lower-density of some set of integers  whose complement is
recursively enumerable. In other words,
$$y=\liminf_{n \to \infty}
  \frac{1}{n}\sum_{k=1}^n \inf_{j} f(k,j).$$
  For some computable  $f:\mathbb{N} \times  \mathbb{N} \to  \{0,1\}$.}
   \item{$z \in [0,1]$ is in $\Delta_2$ iff it is the
  density of some set of integers whose complement is
recursively enumerable. In other words,
$$z=\lim_{n \to \infty}
  \frac{1}{n}\sum_{k=1}^n \inf_{j} f(k,j).$$
  For some computable  $f:\mathbb{N} \times  \mathbb{N} \to  \{0,1\}$.}
\end{enumerate}
\end{lem}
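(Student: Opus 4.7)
The plan is to treat all three parts together, splitting each biconditional into an easy direction (rewriting the density expression into the required $\Sigma$/$\Pi$/$\Delta$ form) and a hard direction (realizing a given number in the arithmetical class by a uniform block construction). The easy direction proceeds uniformly by rewriting
\[
\sum_{k=1}^n \inf_j g(k,j) \;=\; \inf_{(j_1,\dots,j_n)\in\NN^n}\,\sum_{k=1}^n g(k,j_k),
\]
which, after encoding tuples as natural numbers, exhibits $A_n := \frac{1}{n}\sum_{k=1}^n\inf_j g(k,j)$ as $\inf_m F(n,m)/n$ for some computable $F$; hence each $A_n$ is uniformly $\Pi_1$. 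Substituting into $\limsup_n A_n = \inf_N\sup_{n\ge N}A_n$ exposes the expression in (1) as $\inf\sup\inf$, i.e.\ as $\Pi_3$. For (2), $\liminf_n A_n=\sup_N\inf_{n\ge N}A_n$ collapses the two consecutive $\inf$'s into one, giving a $\sup\inf$ in $\Sigma_2$. For (3), the existence of the limit places $z\in\Sigma_2\cap\Pi_3$, and one refines this to $\Delta_2$ by exhibiting a computable rational sequence $r_N\to z$: each $A_n = M_n/n$ is rational with $M_n$ an integer enjoying a computable decreasing approximation, and a diagonal search using the Cauchy property of $(A_n)$ to detect stabilisation extracts such a sequence.

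For the hard direction of (1), write $x=\inf_i a_i$ with $a_i=\sup_j b_{i,j}$, $b_{i,j}=\inf_k f(i,j,k)$ for some computable $f:\NN^3\to\mathbb{Q}\cap[0,1]$, and set $c_i:=\min(a_1,\dots,a_i)\searrow x$. Commuting $\min$ with $\sup$ (the $\sup$ variables being independent) yields
\[
c_i \;=\; \sup_{\vec m\in\NN^i}\gamma_{i,\vec m},\qquad \gamma_{i,\vec m} \;:=\; \min_{j\le i}b_{j,m_j} \;=\; \inf_{(j,k):\,j\le i}f(j,m_j,k),
\]
with $\gamma_{i,\vec m}$ uniformly $\Pi_1$ via the explicit decreasing computable approximants $\Gamma_{i,\vec m}(t):=\min_{j\le i,\,k\le t}f(j,m_j,k)$. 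I would then partition $\NN$ into super-blocks $S_1,S_2,\dots$ and subdivide each $S_i$ into sub-blocks $T_{i,r}$ indexed by a computable enumeration of $\NN^i$ that visits every tuple infinitely often, choosing the lengths so that every (sub-)block dominates the total length of all its predecessors. In $T_{i,r}$, corresponding to tuple $\vec m(i,r)$, run the co-r.e.\ schedule that at stage $t$ forces $g(\cdot,t)=0$ on a uniformly spread subset of $\lceil (1-\Gamma_{i,\vec m(i,r)}(t))|T_{i,r}|\rceil$ positions; this is monotone in $t$, so a single computable binary $g$ realises $h(k)=\inf_t g(k,t)$ with eventual density in $T_{i,r}$ equal to $\gamma_{i,\vec m(i,r)}$. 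Denoting $D(n):=\frac{1}{n}\sum_{k\le n}h(k)$, uniform spread plus fast growth give $c_i-o(1)\le\sup_{n\in S_i}D(n)\le c_{i-1}+o(1)$ (lower bound because the enumeration visits tuples with $\gamma$ arbitrarily close to $c_i$; upper bound because each sub-block density is $\le c_i\le c_{i-1}$ and the carry-over from $S_{i-1}$ contributes $\le c_{i-1}$), whence $c_i\searrow x$ forces $\limsup_n D(n)=x$.

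Parts (2) and (3) use a single-layer version of the same idea. Given $y=\liminf_i e_i$ (resp.\ $z=\lim_i e_i$) for a computable rational sequence $e_i\in[0,1]$, partition $\NN=\bigsqcup_i B_i$ with $|B_i|/\sum_{j<i}|B_j|\to\infty$ and, in $B_i$, mark a uniformly spread subset of size $\lfloor e_i|B_i|\rfloor$ as $1$ and the rest as $0$; since $e_i$ is computable, $h$ itself is computable and one may take $g(k,j):=h(k)$ constant in $j$. Block domination plus uniform spread give $\liminf_n D(n)=\liminf_i e_i=y$, and when $e_i$ converges, $\lim_n D(n)=\lim_i e_i=z$. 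The main obstacle is the hard direction of (1): the sub-blocks within $S_i$ must witness $c_i$ as a supremum of $\Pi_1$ numbers while the super-blocks jointly witness $x$ as an infimum, all realised by one computable $g$. The delicate estimate is the uniform upper bound $\sup_{n\in S_i}D(n)\le c_{i-1}+o(1)$, which requires both the uniform spread of the killed positions within each sub-block (without which a local spike could raise $D(n)$ well above $\gamma$ and destroy the $\limsup$ bound) and the fast-growth condition on block lengths relative to earlier cumulative sizes.
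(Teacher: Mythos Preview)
Your argument is essentially correct, but for the hard direction of part~(1) you take a substantially more elaborate route than the paper.  The paper starts from the $\Pi_3$ representation $x=\limsup_n\inf_j G(n,j)$ with $G$ computable, first replacing $G(n,j)$ by $\min_{j'\le j}G(n,j')$ so that $G$ is nonincreasing in $j$.  It then fixes rapidly growing markers $t_n=2^{n^2}$ and, for $t_n<k\le t_{n+1}$, sets $g(k,j)=1$ iff $(k\bmod n)\in\{1,\dots,\lfloor nG(n{+}1,j)\rfloor\}$.  Because $G$ is monotone in $j$, so is $g(k,\cdot)$; hence $\inf_j g(k,j)=\lim_j g(k,j)$ and the infimum commutes with the finite sum, giving $\frac{1}{t_n}\sum_{k\le t_n}\inf_j g(k,j)\approx\inf_j G(n,j)$, from which the $\limsup$ recovers~$x$.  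This is a \emph{single}-layer block construction: the innermost $\inf_j$ is absorbed by the second argument of $g$ rather than by an extra tier of blocks.  You instead first collapse to the co-r.e.\ indicator $h(k)=\inf_j g(k,j)$ and then build $h$ via a two-tier super-block/sub-block scheme witnessing $x=\inf_i\sup_{\vec m}\gamma_{i,\vec m}$, which is what forces the delicate uniform-spread and domination estimates you flag.  Your route has the virtue of making the co-r.e.\ set explicit, but the paper's monotonicity trick removes the second layer entirely and with it most of the bookkeeping.

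One caution on your easy direction for part~(3): the phrase ``diagonal search using the Cauchy property of $(A_n)$ to detect stabilisation'' does not obviously yield a computable rational sequence converging to $z$, since each $A_n$ is only $\Pi_1$ and the stabilisation time of the integer approximants $M_n^{(t)}$ is not recursive in $n$.  The paper does not prove this direction at all; it simply invokes Zheng--Weihrauch for the fact that a number of the form $\lim_k\inf_j f(k,j)$ lies in $\Delta_2$.  You should either cite that result or replace the sketch by a genuine argument.
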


\begin{proof}We prove the part about $x \in \Pi_3 \cap [0,1]$, as the proofs of the other parts are completely analogous.
By \cite{ZW01}, for $x$ as above, we have
$$x=\limsup_{n \to \infty}\inf_j G(n,j),$$
 for some recursive function $G:\mathbb{N} \times \mathbb{N} \to \mathbb{Q}\cap [0,d]$.
By replacing $G(n,j)$ with $\min_{k\le j}G(n,j)$, is a , we can
assume that $G$ is monotone in the second variable.

Let $t_n=2^{n^2}$.
  We will construct a recursive function $g:\mathbb{N}
  \times \mathbb{N} \to \{0,1\}$, so that for all sufficiently large integers $n$ and $j$,
\begin{equation}\label{eq:binary_est_subseq1}
|\frac{1}{t_n}\sum_{k=1}^{t_n}g(k,j) - G(n,j)| < \frac{1}{n}
\end{equation}


  and for all $l \in (t_n, t_{n+1})$,
\begin{equation}\label{eq:binary_est_subseq2}
  \frac{1}{l}\sum_{k=1}^{l}g(k,j) = \theta
  G(n,j) +
  (1-\theta)G(n+1,j) + \epsilon,
\end{equation}
  for some $\theta \in (0,1)$ and $\epsilon \in (-\frac{1}{n},+\frac{1}{n})$.

This lemma will clearly follow from this.

Here is one way to define the function $g$:

For $(n,j) \in \mathbb{N}^2$,  let  $K_{n,j}:=\lfloor
nf(n+1,j)\rfloor $. We thus have $|G(n,j) - K_{n,j}/n|<\frac{1}{n}$.
For $t_n < k \le t_{n+1}$, define $g(k,j)=1$ if
  $(k \mod n) \in \{1,\ldots, K_{n,j}\}$ and $g(k,j)=0$ otherwise.
  Thus,

  $$|\frac{1}{t_{n+1}}\sum_{k=1}^{t_{n+1}}g(k,j) - G(n+1,j)|\le$$

 $$|\frac{1}{t_{n+1}}\sum_{k=1}^{t_{n}}g(k,j)|+\frac{1}{t_{n+1}}\sum_{q=0}^{\frac{t_{n+1}-t_n}{n}}|\sum_{r=0}^{n-1}g(nq+r,j)-G(n,j)|
 + \frac{t_n}{t_{n+1}}|G(n,j)|\le $$

  $$\frac{t_n}{t_{n+1}}+\frac{n}{t_{n+1}}+ \frac{d\cdot t_n}{t_{n+1}} < \frac{1}{n}.$$
The last inequality holding for large $n$'s.

A similar estimate shows that equation \eqref{eq:binary_est_subseq2}
holds.

\end{proof}

\section{\label{sec:entropy_dim_chacterization}An arithmetical characterization of entropy dimensions of SFTs}
\subsection{Statement of main theorem}
We now have the necessary definitions to state our main
result about entropy dimensions of SFTs:
\begin{thm}\label{thm:entrop_dim_char}
  For any $d \ge 2$,
  \begin{enumerate}
    \item{The class of upper entropy dimensions of $\ZD$ SFT's is
$[0,d] \cap \Pi_3$.}
\item{ The class of lower entropy dimensions
of $\ZD$ SFT's is $[0,d] \cap \Sigma_2$.}
\item{ The class of entropy dimensions
of $\ZD$ SFT's is $[0,d] \cap \Delta_2$.}
  \end{enumerate}
\end{thm}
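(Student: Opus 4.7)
\textbf{Proof proposal for Theorem \ref{thm:entrop_dim_char}.}

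The plan is to prove each of the three parts by establishing the two matching inclusions separately. For the upper bound (``$\subseteq$''), the key observation is that for an SFT $X$ defined by forbidden patterns of size $\le M$, a $k\times k$ pattern is $X$-admissible iff it is locally legal \emph{and} extends to an $N\times N$ legal pattern for every $N \ge k$. Local legality is decidable, so if we let $A(k,j)$ denote the number of $k\times k$ locally legal patterns that extend to a locally legal $(k+j)\times (k+j)$ pattern, then $A(k,\cdot)$ is computable and $N_k(X) = \inf_j A(k,j)$. Taking logs and inserting into the definitions, $\overline{D}(X) = \inf_N \sup_{k \ge N} \inf_j \frac{\log\log A(k,j)}{\log k}$, which falls into $\Pi_3$; similarly $\underline{D}(X) \in \Sigma_2$. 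Part (3) follows since $\Delta_2 = \Sigma_2 \cap \Pi_2 \supseteq \Sigma_2 \cap \Pi_3$ is essentially forced when upper and lower dimensions coincide (one uses that for a convergent sequence the $\limsup$ and $\liminf$ collapse, dropping one quantifier).

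For the realization direction (``$\supseteq$''), the strategy is to combine the three ingredients already assembled in Section \ref{sec:examples}: the hierarchical $p$-adic Robinson skeleton $X_p$, the blue/white branching idea used for $Y_\alpha$ in \S\ref{subsec:frac_entrop_dim}, and an SFT simulation of a Turing machine running inside the empty space of each cluster. Given $\alpha \in [0,d] \cap \Pi_3$, apply Lemma \ref{lem:binary_avg_comp} to write $\alpha/d = \limsup_n \frac{1}{n}\sum_{k=1}^n \inf_j g(k,j)$ for a computable binary $g$. Now take $p$ suitably (e.g.\ the powers of $p$ playing the role of the $2^n$ scales in \S\ref{subsec:frac_entrop_dim}) and decorate $X_p$ so that inside each $n$-cluster a $p^n \times p^n$ region hosts a \TSD{} of a fixed universal Turing machine which enumerates values $g(n,1),g(n,2),\ldots$. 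Impose the SFT rule: the central cross of an $n$-cluster may carry blue branching only if the \TSD{} in that cluster never outputs $0$ by time $p^n$. As $n$ grows, the available time/space exhausts every fixed $j$, so the set of levels at which blue branching is permitted has indicator precisely $a_n = \inf_j g(n,j)$. The entropy-dimension computation from \S\ref{subsec:frac_entrop_dim} then yields $\overline{D}(X) = d\cdot\limsup_n \frac{1}{n}\sum_{k\le n}a_k = \alpha$. For $\underline{D}$ and $D$, replace $\limsup$ with $\liminf$ or $\lim$, using the corresponding parts of Lemma \ref{lem:binary_avg_comp}.

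Two subsidiary issues will need care. First, one must guarantee $\underline{D}(X)=0$ in the $\limsup$ construction and $\overline{D}(X) = \alpha$ in the $\liminf$ construction, so that the realised invariant genuinely equals $\alpha$ and is not accidentally pushed up or down by the other quantifier; this is handled by making the branching completely inactive (only one legal extension) on levels where $a_n=0$, and by ensuring the number of blue choices on a single allowed level dominates all lower-order contributions. Second, for $d \ge 3$ one has to promote the two-dimensional Robinson-type skeleton to a $d$-dimensional one; the cleanest route is to take a product with $d-2$ copies of the trivial $\ZZ$-SFT and to let the extra directions host additional independent markings of controlled entropy dimension, or alternatively to use a direct $d$-dimensional hierarchical construction. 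Finally, one checks that $\alpha$ actually lies in $[0,d]$ is automatic from the construction since the maximum branching factor per $n$-cluster is bounded by $\exp(C p^{nd})$.

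The main obstacle I anticipate is the consistency of embedding the \TSD{} into the Robinson hierarchy while preserving the exact counting of blue nodes. One must verify that the extra SFT rules controlling the Turing machine simulation do not themselves contribute non-trivially to the entropy dimension, i.e.\ that the space-time diagram in each cluster has at most polynomially (in $p^n$) many admissible fillings compared with the $\exp(\text{number of blue nodes})$ contribution from branching; this is standard but requires the TM to be deterministic and the tile rules to be rigid, so that for each cluster there is essentially a unique \TSD{}.
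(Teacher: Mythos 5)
Your necessity argument is essentially the paper's (count locally admissible extensions $A(k,j)$, note computability and $N_k=\inf_j A(k,j)$, count quantifiers), so that half is fine. The realization direction, however, has a genuine gap, and it is precisely the point where the paper departs from the naive scheme you describe. Hosting a full $p^n\times p^n$ time-space diagram of a universal machine inside each $n$-cluster ruins the counting: $N_k(X)$ includes every $k\times k$ window into the (unique) diagrams sitting in clusters of all levels $m\gg\log_p k$, and a machine that enumerates $g(n,1),g(n,2),\dots$ necessarily drives unbounded counters and scratch work, so the distinct $k\times k$ windows into a single huge diagram can already number $2^{ck}$ (every $k$-bit middle segment of a counter occurs at some time). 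Then $\log\log N_k\gtrsim\log k$ and the control layer alone forces $\overline{D}(X)\ge 1$, destroying the construction for every target $\alpha<1$ and corrupting exact values above. Determinism and rigidity (``essentially a unique diagram per cluster'') do not help, because the problem is the number of distinct finite windows, not the number of diagrams. This is exactly the difficulty the paper isolates, and its main technical device is the sparse-board construction: a level-$n$ board occupies only the $O(n^2)$ positions of the form $(p^i,p^j)$, so at most $O((\log_p k)^2)$ cells of any board meet a $k\times k$ window, and Lemmas \ref{lem:matrix_sub_SFT} and \ref{lem:SFT_Turing_boards} show the whole control layer then has polynomially bounded $N_k$. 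The price, paid in Lemma \ref{lem:g_density_dim_SFT}, is that the machine only has sparse access to the base layer (the level and the $\alpha$-variable must be readable from the powers-of-$p$ positions); your proposal has no analogue of either side of this trade-off.

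A second gap: your rule ``blue branching at level $n$ is allowed only if the diagram never outputs $0$ by time $p^n$'' computes a time-truncated predicate, not $a_n=\inf_j g(n,j)$. If the least witness $j$ with $g(n,j)=0$ exceeds what a level-$n$ cluster can check (take $g$ whose least witnesses grow faster than any computable function), blue branching is wrongly permitted at level $n$, and the upper/lower densities of the permitted levels --- hence the realized entropy dimensions --- need not equal $\alpha$; your sentence ``the available time exhausts every fixed $j$'' conflates ``for every $j$ some level checks $j$'' with ``level $n$ is checked against every $j$''. The paper avoids this by arranging that any level carrying $\alpha=1$ is examined by boards of unbounded level, so halting at any finite time is excluded by admissibility and the full infimum over $j$ is enforced (property (3) of Lemma \ref{lem:SFT_Turing_boards} together with the machine $\mathcal{R}$ of Lemma \ref{lem:g_density_dim_SFT}); you would need an analogous mechanism. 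Two smaller points: your reduction for part (3) is stated with the inclusion backwards ($\Sigma_2\cap\Pi_2\subseteq\Sigma_2\cap\Pi_3$), and the claimed ``dropping of a quantifier'' when the upper and lower dimensions coincide is exactly what needs justification --- the paper appeals to the characterization of numbers of the form $\lim_k\inf_j f(k,j)$ in \cite{ZW01}; and for $d\ge 3$ a product with trivial directions only realizes values in $[0,2]$, so the range $(2,d]$ needs the extra markings you only gesture at.
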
 

 Each of the three
statement of theorem \ref{thm:entrop_dim_char} above splits into two
parts. The first part states a necessary restriction on the range of
the entropy dimension: For any SFT the upper entropy dimension is a
number in $[0,d] \cap \Pi_3$. The second part of the statement is a
sufficiency claim about this restriction: For any real number $x \in
[0,d] \cap \Pi_3$ there exists a $\ZZ^{d}$-SFT $X$ with
$\overline{D}(X)=x$.
\subsection{Justification of the arithmetical restrictions}
For $j \ge k$, let $N_{k,j}(X)$ be the number of $k$-blocks
appearing in locally admissible $j$-blocks. It follows that $N_k(X)=
\inf_{j \ge k}N_{k,j}(X)$. The function $f(k,j)=\frac{\log \log
N_{k,j}(X)}{\log k}$ is computable for any SFT $X$. Thus, the upper
and lower/ upper entropy-dimensions of any SFT  $X$ are of the form
$\limsup_{k \to \infty}\inf_j f(k,j)$ and $\liminf_{k \to
\infty}\inf_j f(k,j)$, for a computable function $f:\mathbb{N}^2 \to
\mathbb{Q}$. When the upper and lower entropy dimensions are equal,
they are of the form $\lim_{k \to \infty}\inf_j f(k,j)$. By
\cite{ZW01} numbers of these forms are $\Pi_3$, $\Sigma_2$ and
$\Delta_2$ respectively.

 The restriction
that the entropy dimension of a $\ZD$ subshift is in the range
$[0,d]$, follows from the fact that for any $\ZD$ subshift $X
\subset S^\ZD$ $N_k(X) \le |S|^{k^d}$.

\subsection{ Construction of SFT with given entropy-dimensions - outline }

The proof of the sufficiency part of the claims is constructive in
nature: We describe an algorithm, which given  a ``concrete
description'' of a number in one of the above classes, returns a
$\ZZ^d$-SFT with the appropriate upper/lower entropy dimension.

To simplify the presentation as much as possible, we prove the
result for $d=2$. The generalization of the proof to higher
dimensions introduces no new difficulties.

By lemma \ref{lem:binary_avg_comp}, this amounts, given a computable
function $g:\NN^2 \to \{0,1\}$ to the construction of a $\ZZ^2$ SFT
$X$ with

$$\overline{D}(X) = \limsup_{n \to \infty}\frac{2}{n}\sum_{k=1}^n \inf_{j}g(j,k)=:\alpha_1,$$
and
$$\underline{D}(X) = \liminf_{n \to \infty}\frac{2}{n}\sum_{k=1}^n \inf_{j}g(j,k)=:\alpha_2$$

Here is an overview of this construction: We assume that the
function $g$ is given in terms of a Turing Machine $\mathcal{M}$
which computes its value. The SFT $X_{g}$ to be constructed will be
set up from $2$ ``layers'': a \emph{base layer} and a \emph{control
layer}. The base layer of $X_g$ is essentially of the form
$Y_{\alpha_1,\alpha_2}$ from the example in
\ref{subsec:frac_entrop_dim} above. The control layer is an
extension of a $p$-adic SFT from \ref{subsec:p_adic_SFTs}, where
each admissible point represents arbitrarily long partial \TSD s of
a Turing machine related to $\mathcal{M}$. A maximal
sub-configurations of the control-layer which represents a single
\TSD ~ is a \emph{board}. The local restrictions will be set up so
that in each of the \TSD s represented on a board, the input of the
machine corresponds to segments of the base-layer of the point.
Admissibility of the point will imply that the machine execution
represented in any board does not terminate.

The basic strategy for the construction above was perviously
utilized in \cite{HM_SFT} to construct an SFT with given entropy in
$\Pi_1 \cap [0,\infty)$. Although we bring a self-contained
description of construction here, a reader will probably benefit
from familiarity with the construction of \cite{HM_SFT}.

The term ``boards'' in this context, as well as the idea of
representing partial time-space diagrams of Turing machine
executions goes back to at least to Robinson \cite{robinson71}, in
his proof of Berger's theorem.

In some ways controlling the entropy-dimension requires more care
than just  controlling  the topological entropy: We must take care
that the growth of the control layer, which represents a \TSD~ of a
Turing machine do not effect the entropy-dimension. We overcome this
problem by arranging that boards are very ``sparse'', and so will
have a negligible effect on the growth of $N_k(X_g)$. For this we
use boards which are much sparser then those used in
\cite{robinson71} and \cite{HM_SFT}: In our current construction
boards of level $\ge n$ takes up only $o(n^2)$ cells in a $p^n\times
p^n$ square configuration. Specifically,
 our boards come in levels, such that boards of level $k$ are translates of a set of the form:
 $$\left\{(p^i,p^j) \in \ZZ^2:~  0 < i,j<k \right\} \cup \{(0, p^j):~ 0<j<k\} \cup \{(p^j,0):~ 0<j<k\}.$$

This introduces a further difficulty: Machine executions represented
in the control layer can only access limited and sparse data from
the base layer in order to preform the desired computation. We must
balance things so that this ``limited data'' is sufficient.

Let us turn to the details:

\subsection{\label{ subsec:boards}Sparse representations of Turing machine \TSD s}

Recall that a \emph{Turing machine} $\mathcal{M}$ is given by a
finite set of Machine states $\Sigma_S$ and finite number of tape
characters $\Sigma_T$, along with a transition function $F:\Sigma_S
\times \Sigma_T \to \Sigma_S \times \Sigma_T \times
\{\leftarrow,\rightarrow\}$, indicating the target state, output
character and direction of movent of the machine's head. A
\emph{\TSD} for  $\mathcal{M}$ is an array in $$\left( \Sigma_S\cap
(\Sigma_S \times \Sigma_T \times
\{\leftarrow,\rightarrow\}\right)^{\ZZ^2},$$ where rows represent a
tape configuration and columns represent the evolution of time, in a
manner which is consistent with the transition function $F$. Adding
to each cell a marking of the direction to the head of the machine
in its row (left, or right unless), and including trivial \TSD~
where the machine's head never appears,  the set of all \TSD s for a
given Turing machine $\mathcal{M}$ is an SFT.

We now specify  our implementation of ``boards'':

To begin, we define an SFT $\widehat{X}_p$ which extends $X_p$ for
$p\ge 4$. There will be additional markings on the tiles of
$\widehat{X}_p$. We call the new labels in $\{r,g,y\}^2$, hinting
the traffic light colors red, green and yellow. One color label will
correspond to the horizontal location and the other corresponds to
the vertical location of the node. We will refer to these extra
markings as horizontal and vertical \emph{traffic-light markings}.
 Boards will consists of nodes
with green horizontal and vertical labels. Recall that the position
of each node in $X_p$ corresponds to a pair of $p$-adic integers.
The SFT $\widehat{X}_p$ will have the property that yellow
(horizontal/vertical) labels appear in nodes whose
(horizontal/vertical) position corresponds to a $p$-adic integer
which is equal to $(p-1)p^j+2p^k \mod p^{j+1}$ for some $j \in
\mathbb{N}$ and $k < j$. Green (horizontal/vertical) labels will
appear in nodes whose (horizontal/vertical) position corresponds to
a $p$-adic integer which is equal to $(p-1)p^j \mod p^{j+1}$ for
some $j \in \mathbb{N}$, and will indicate the bottom and left edges
of a board.

The implementation of this is based on a slight generalization of
the construction in subsection \ref{subsec:rat_entropy_dim}.
We
state this in the following lemma:
\begin{lem}\label{lem:matrix_sub_SFT}
  Given $p \ge 2$, a finite set $C$ and a function $A:(p)^2\times C
\to C$, there is an SFT extension $\widehat{X}_p$ of $X_p$ such that
for ever $x \in \widehat{X}_p$ every level-$n$ node with labels
$(i,j) \in (p)^2$ has a label in $c \in C$, so that $c=A(i,j,c')$
where $c' \in C$ is the label of the central node of the
$n+1$-cluster containing this node.
\end{lem}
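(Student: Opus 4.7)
My plan is to build $\widehat{X}_p$ by attaching a $C$-valued label to every tile of $X_p$ and imposing local SFT rules that force the hierarchical relation $c=A(i,j,c')$ at every sub-center. This directly generalizes the blue/white propagation used in subsection \ref{subsec:rat_entropy_dim}, where one had $C=\{\mathrm{blue},\mathrm{white}\}$ and $A$ was essentially the indicator of a specific rectangle $B_h\times B_v$.

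Concretely, I would enlarge the alphabet from $S_p$ to $S_p\times C$, keeping all original $X_p$ rules on the $S_p$-coordinate, and add three families of rules on the $C$-coordinate. First, every arrow emanating from a $W$-node inherits that node's $C$-label. Second, the $C$-label propagates unchanged along each arrow chain and across intermediate $B$-nodes, using the same adjacency structure that $X_p$ already prescribes in rules (1)--(14). Third, whenever an arrow carrying $C$-label $c'$ enters a $W$-node whose position within its containing cluster is $(i,j)\in(p)^2$, that $W$-node's $C$-label is required to be $A(i,j,c')$. Since $C$ is finite and all these rules involve only nearest-neighbour relations in $X_p$, they define an SFT extension $\widehat{X}_p$ of $X_p$.

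To verify the conclusion I would appeal to the self-similar cluster structure established in subsection \ref{subsec:p_adic_SFTs}: the arrow paths in $X_p$ already connect the central $W$-node of every $(n+1)$-cluster to each of the level-$n$ sub-centers it contains. Propagating the $C$-label of the parent center along these paths by rules of types one and two, then applying the rule of type three at the sub-center, produces precisely the relation $c=A(i,j,c')$ demanded by the lemma.

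The main obstacle I foresee is combinatorial bookkeeping rather than any conceptual difficulty. The arrow paths from a cluster center to its various sub-centers take different shapes depending on $(i,j)$, and horizontal and vertical arrows interact with intermediate $B$-nodes in specific ways dictated by the $X_p$-rules. I would handle this by a systematic case analysis on tile types, checking that the $C$-propagation is well defined on every arrow chain, that the $C$-label read into a sub-center is independent of which arrow path from the parent center is traced, and that the rule $c=A(i,j,c')$ is consistent under these conventions. Once this check is completed, the SFT $\widehat{X}_p$ has the required property.
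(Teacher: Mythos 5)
Your construction is essentially the paper's own proof: the paper likewise takes the tiles of $X_p$ with an extra $C$-marking and imposes the local rule that an arrow with position labels $(i,j)$ and marking $c$ meets an arrow tail with the same marking or a perpendicular arrow with marking $c'$ satisfying $c=A(i,j,c')$, so the $C$-label propagates along the arrow structure exactly as you describe. The consistency checks you flag are left implicit in the paper as well, so your proposal matches both the approach and the level of detail.
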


\begin{proof}
  The tiles of $\widehat{X}_p$ will be those tiles of $X_p$, with an
additional marking in $C$ for every tile. The constraints on the
additional markings are as follows: an arrow with position labels
$(i,j)$ and $C$-marking $c$ can only meet an arrow tail with
$C$-marking $c$ or a perpendicular arrow with $C$-marking $c'$
satisfying $c=A(i,j,c')$.
\end{proof}

In our specific example $C=\{r,g,y\}^2$,
 and the function $A$ is of
the form $A(i,j,(c_i,c_j))=(F(i,c_i),F(j,c_j))$ with $F:(p)\times C
\to C$ defined by:
\begin{enumerate}
  \item{$F(p-1,c)=g ~\forall c \in C$}
  \item{$F(0,g) = g $}
\item{$F(2,g)=y$}
\item{$F(i,c)=r ~\forall c \in C, i\in (p)\setminus\{0,2,p-1\}$}
\item{$F(2,y)=r$}
\end{enumerate}

It follows from these rules that in any $n$-cluster, the
traffic-light markings are determined for those nodes contained in a
cluster whose central node has a (vertical/ horizontal) $p$-marking
different then $0$ or $2$ contained in more then one cluster which
has (vertical/ horizontal) $p$-marking with value $2$. Thus, in a in
a $p^n \times p^n$ configuration of $\widehat{X}_p$ which projects
onto an $n$-cluster there are only $O(n)$ cells which are not
determined. It follows that $N_k(\widehat{X}_p)$ has polynomial
bounded-growth.

Level $1$-nodes which have yellow or green traffic-light markings
are considered free. Cells with green vertical (respectively
horizontal) green traffic-light markings mark the bottom
(respectively left) boundary of a board.

In any $n$-cluster, the traffic-light markings are determined for
those rows and columns containing a digit different then $\{0,1\}$
or more then $1$ digit which is $1$. Thus,in a in a $p^n \times p^n$
configuration of $\widehat{X}_p$ which projects onto an $n$-cluster
there are only $O(n)$ cells which are not determined. It follows
that $N_k(\widehat{X}_p)$ has polynomial bounded-growth.

As in \cite{robinson71} and \cite{HM_SFT}, for $x \in
\widehat{X}_p$, an \emph{infinite board} is an infinite set $A
\subset \ZZ^2$  for which $x\mid_A$ represents an infinite \TSD~
(possibly a trivial one).

The following lemma, summarizes the rest of the ``sparse-boards''
SFT construction:

\begin{lem}\label{lem:SFT_Turing_boards}
Given integers $p,q \ge 2$ and a Turing machine $\mathcal{M}$ with
state set $\Sigma_S$ and tape alphabet $\Sigma_T$ with $S_q \subset
\Sigma_T$, there exists an extension of $Z=Z_{\mathcal{M},p,q}$ of
$\widehat{X}_p \times X_q$ with the following properties:
\begin{enumerate}
  \item{There are factor maps $\pi_p:Z \to \widehat{X}_p$ and $\pi_q: Z \to X_q$.}
  \item{For any point $z \in Z_{\mathcal{M},p,q}$, any board $B$
  in $\pi_p(z)$ represents a \TSD~ of $\mathcal{M}$'s execution, with the bottom row of $B$ representing an a configuration of
  $\mathcal{M}$ in the initial state, with the tape initialized
  according to the corresponding cells of $\pi_q(Z)$.}
  \item{Any any $z \in Z$,  no board represents an execution of $\mathcal{R}$ which reaches the terminating state.}
  \item{ $N_k(Z)$ has polynomial growth.}
\end{enumerate}
\end{lem}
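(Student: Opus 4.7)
The plan is to extend the alphabet of $\widehat{X}_p\times X_q$ by attaching to each cell an extra ``control symbol'' from a finite alphabet $\Lambda$. On a cell whose horizontal and vertical traffic-light markings are both green (the skeleton of some board), the control symbol encodes an entry of a Turing-machine \TSD: a tape character, possibly a machine state, and a head-direction flag. On arrow cells lying between adjacent board cells, the control symbol encodes a small ``packet'' that will transport \TSD~data between board cells. On all remaining cells it takes a distinguished null value. Because the green-green pattern is already locally readable from $\widehat{X}_p$, the apparatus of Lemma \ref{lem:matrix_sub_SFT} (with the set $C$ enlarged to carry also the control symbols) places and constrains these extra markings via finitely many local rules. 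The factor maps in (1) are then simply the coordinate projections.

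The transition function $F$ of $\mathcal{M}$ is enforced locally at every cross sitting on a board: the packet exiting the cross upward (or sideways, toward the next board column) must equal the image under $F$ of the packets arriving from below and from the two sides, treated as the relevant neighbours in the abstract \TSD. Arrow cells strictly between board cells simply copy their packet along. Cells lying on the bottom edge of a board (positions $(p^j,0)$) are forced by local rules to carry $\mathcal{M}$'s initial state and to inherit the tape symbol equal to the $X_q$-symbol sitting at the same coordinate; this realises property (2), that each board's \TSD~has its bottom row consistent with the base layer. Property (3) is obtained by forbidding the halting state from appearing anywhere in the control alphabet --- a purely local constraint.

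For property (4), observe that once the base layer $\widehat{X}_p\times X_q$ (which already has polynomial $N_k$ by Lemma \ref{lem:matrix_sub_SFT} and the discussion of section \ref{subsec:p_adic_SFTs}) is fixed in a $k\times k$ window, the control layer is almost entirely forced: every board cell is a deterministic function of the initial tape symbols read from $X_q$, every channel cell is determined by its source, and the only free control cells are those whose relevant inputs lie outside the window. Paralleling the $O(n)$-boundary estimate already established for $\widehat{X}_p$, the number of such free cells in a $p^n\times p^n$ window is $O(n)$, giving $\log N_k(Z)=O(\log k)$ and hence polynomial growth.

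The main technical obstacle is ensuring that a \emph{finite} alphabet $\Lambda$ suffices to carry Turing computations across arrows whose length grows exponentially with the level of the board. The resolution is that the transition function is local in the abstract \TSD, so each packet need only record $O(1)$ information (a tape character, a flag marking presence/absence of the head, and if present the machine state and its direction of travel); the arrow-channels merely copy this $O(1)$-sized packet across topologically simple paths. This is the same mechanism used in \cite{robinson71} and \cite{HM_SFT}, adapted to the sparser geometry of the present boards.
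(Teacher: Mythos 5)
Your proposal follows essentially the same route as the paper's proof: a control layer on the sparse board skeleton (placed via the traffic-light/Lemma \ref{lem:matrix_sub_SFT} apparatus) transmitting $O(1)$-sized \TSD~packets along free rows and columns as in \cite{robinson71}, the bottom row tied to the $X_q$ layer and the halting state forbidden, and polynomial growth obtained from the fact that each represented time-space diagram is determined by its bottom row, so only boards and channels whose inputs lie outside the window contribute free cells. Your $O(n)$ free-cell count in a $p^n\times p^n$ window is stated by analogy rather than derived, but it corresponds to the paper's own (equally terse) bound of $|S|^{\log_p k}$ for the single high-level board meeting a $k\times k$ square, so the argument matches the paper's in substance.
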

\begin{proof}
Starting from the SFT $\widehat{X}_p \times X_q$ we extend it so
that every board of level $k$ and will represent a $(k+1) \times
(k+1)$ time-space diagram of the Turing machine $\mathcal{M}$:
Coordinates belonging both to a free column
 and a free row will represents cells in this diagram. Free cells will transmit
the \TSD~ information horizontally and vertically whenever there are
no obstruction signals, as in \cite{robinson71}. We superimpose the
restriction that level $1$-nodes with green horizontal traffic-light
markings correspond to  the left boundary of the \TSD~ and tiles
with green vertical traffic-light markings correspond to the lower
boundary of the \TSD~. In particular, a level-$1$ node  with green
horizontal and vertical traffic-light markings  represents a Turing
machine head in the initial state. We also superimpose restrictions
that cells of the bottom row of a board (these are level-$1$ nodes
with green vertical traffic-light markings)  correspond to the tape
initialized according to the corresponding cells of $X_q$. The
extension of $\widehat{X}_p \times X_q$ defined in such a way is
called is $Z$.

It remains to estimate $N_k(Z)$: We already know that
$N_{k}(X_q)=O(k^2)$ and that $N_{k}(\widehat{X}_p)$ is bounded by a
polynomial. Thus, for a $k \times k$ cube, there are $O(k^4)$
possibilities for projecting admissible $Z$-configurations onto
$X_p\times X_q$. Once these projections are fixed, it only remains
to determine the contents of the time-space diagrams represented in
the boards of this configurations. Observe that a time-space diagram
of any Turing machine is completely determined by its bottom row.
Thus, for all boards of level $ \le \log_p k$, the time-space
diagrams are determined by the projection onto $X_q$ of a slightly
larger configuration of size $pk\times pk$. It follows that there
are $O( pk^2)=O(k^2)$ such configurations. The only part of the
configuration which is yet undetermined are those space-time
diagrams corresponding to boards of levels $ > \log_p k$. A most one
such board can intersect a $k \times k$ configuration, and no more
then $(\log_p k)^2$ cells of such board can intersect a $k\times k$
square. Thus, the number of possibilities for this space time
diagram is bounded from above by
 $|S|^{\log_p k}$ where $S$ is the tape and state alphabet of the Turing machine.
This number is polynomial in $k$.

\end{proof}

\subsection{Completing the construction}

By now we have described most of the elements involved in our
construction. We now complete the construction. Mainly, we address
the problem of preforming the computation of $g$ based on ``sparse
inputs'':

\begin{lem}\label{lem:g_density_dim_SFT}
  Let $g:\mathbb{N}^2 \to \{0,1\}$ be a recursive function.
  There
  exist an SFT $X_g$,  such that for any $n \in \NN$,
$$\max_{x \in X_g}|\{ (i,j) :~ 1\le i,j\le 2^k ~ x_{i,j} \mbox{ is a blue node}\}| =\prod_{k=1}^n 4^{a_k},$$
 where
  $a_n = \inf_{j} g(n,j)$. In addition, $X_g$ has the property that $N_k(X_g)$ is bounded by a polynomial in $k$.
\end{lem}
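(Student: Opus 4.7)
The plan is to execute the construction outline already given: combine a base layer carrying the blue/white markings from the example in subsection \ref{subsec:frac_entrop_dim} with a control layer implementing Turing-machine computation of the sequence $a_n=\inf_j g(n,j)$. First I would build a Turing machine $\mathcal{R}$ which, on input $n$, searches through $j=1,2,\ldots$, computing $g(n,j)$, and halts exactly when it encounters some $j$ with $g(n,j)=0$. By construction, $\mathcal{R}$ halts on $n$ if and only if $a_n=0$.

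Next I would invoke Lemma \ref{lem:SFT_Turing_boards} with this $\mathcal{R}$ and appropriately chosen $p,q$, obtaining the SFT $Z=Z_{\mathcal{R},p,q}$ extending $\widehat{X}_p\times X_q$. The key property is that every board represents a non-terminating execution of $\mathcal{R}$, with input read from the $X_q$-projection along its bottom edge. I would arrange the local rules so that for every $n$-cluster in the $\widehat{X}_p$-projection there is an associated board of level at least $n$ whose input (read from the $X_q$-layer along the board's sparse bottom row) encodes the integer $n$. Here one uses the fact that the level-$n$ index $(i_n,j_n)\in(p)^2$ of a cluster is locally readable from the $p$-adic cluster structure, and that many boards of every sufficiently high level are present, so that each cluster can be matched to such a board by a local rule.

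I would then add a blue/white marking on the centers of clusters, imposed through local SFT rules coupling the marking to the associated board: the center of an $n$-cluster may be blue only if a local ``non-termination witness'' is present on its associated board, so that the very existence of a blue cross at level $n$ would force a terminating computation of $\mathcal{R}(n)$ if $a_n=0$ held. Since the SFT already forbids boards from terminating, blue crosses at level $n$ exist in some configuration precisely when $a_n=1$. A straightforward recursive count on $n$-clusters then gives that the maximum of $|\{(i,j)\,:\,x_{i,j}\text{ blue}\}|$ over admissible configurations is $\prod_{k=1}^n 4^{a_k}$: when $a_k=1$ the four $(k-1)$-sub-clusters independently realize their maxima, and when $a_k=0$ the level-$k$ center is forced white, contributing a factor $1$. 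Polynomial growth of $N_k(X_g)$ follows from Lemma \ref{lem:SFT_Turing_boards}, since the blue/white markings are determined up to finitely many choices per board-intersected cell, and only $O(\log k)$ cells in a $k\times k$ square are cluster centers at levels where any choice is present.

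The hard part will be step two: designing the local rules so that the sparse data available to a level-$n$ board — only $O(n)$ tape cells accessible through the $X_q$-projection — suffices both to encode the input $n$ and to carry out the recursive enumeration of $(n,j)$ pairs that $\mathcal{R}$ needs. This requires that the $X_q$ base layer be set up so the machine can read $n$ directly from positional data available locally (e.g.\ from the labels $(i_k,j_k)$ along the board's boundary) rather than from an arbitrarily long input string, and that successive values of $j$ be generated internally by $\mathcal{R}$ rather than supplied as input. The sparsity of boards (only $O(n^2)$ cells in a $p^n\times p^n$ square) is what keeps the growth polynomial, but it also constrains the computation; balancing these two requirements is the crux of the construction, as indicated in the outline preceding the statement.
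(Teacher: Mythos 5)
Your overall architecture matches the paper's: a machine $\mathcal{R}$ that halts on level $n$ exactly when $a_n=\inf_j g(n,j)=0$, the sparse-board SFT $Z_{\mathcal{R},p,q}$ of Lemma \ref{lem:SFT_Turing_boards}, the blue/head-tail marking scheme of the $Y_\alpha$ example to get the count $\prod_{k\le n}4^{a_k}$, and polynomial growth inherited from the board construction. However, you explicitly leave unresolved what you correctly call the crux, and the mechanism you do sketch for it is not workable as stated. You propose to ``match each $n$-cluster to an associated board of level at least $n$ by a local rule'' whose input ``encodes the integer $n$'', and to make the blueness of that cluster's central cross depend on a witness sitting on that board. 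Boards are extremely sparse (only $O(n^2)$ cells in a $p^n\times p^n$ square, and at most one board of level $>\log_p k$ meets a $k\times k$ window), while level-$n$ crosses are comparatively dense and typically lie far from any board of level $\ge n$; a shift-invariant local rule cannot couple the color of an individual distant cross to data on a particular faraway board, nor can it hand the board an explicit encoding of $n$ as an input string.

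The paper's proof supplies the missing device, and it is worth noting how it differs from your sketch. First, blueness is not tied cluster-by-cluster to boards; instead every cross of the $X_q$ layer carries a binary variable $\alpha$ which is forced by local rules to be \emph{constant across all crosses of the same level}, and the blue-permission rule at level $n$ refers only to this synchronized $\alpha$ (blue allowed iff both position labels are even or $\alpha=1$). This synchronization is what transports the verdict of a single sparse board to every level-$n$ cross in the point. Second, the input $n$ is never encoded on the tape: the construction takes two independent hierarchical layers with incommensurate scales, $\widehat{X}_p$ (carrying the boards) and $X_q$ with $q=2^p$ (carrying the blue structure and $\alpha$), and the machine \emph{computes} the relevant $X_q$-level $k$ from its sparse input by locating the first index $m$ at which the horizontal marking changes, using $p^m<q^k\le p^{m+1}$, i.e.\ $k=\lceil m\log p/\log q\rceil$; it then reads $\alpha$ (when the board is suitably aligned), runs forever if $\alpha=0$, and otherwise searches for $j$ with $g(k,j)=0$, so that admissibility (no terminating board) forces $\alpha=1$ only at levels with $a_k=1$. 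Your closing paragraph identifies exactly this balancing problem but does not produce the level-synchronized variable or the two-scale reading trick, so as written the proof has a genuine gap at its central step.
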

\begin{proof}

Choose some odd integer $p >2$ and let $q=2^p$.

  We build $X_g$ as an extension of an SFT of the form $Z_{\mathcal{M},p,q}$  described in lemma
  \ref{lem:SFT_Turing_boards}. Recall that $Z_{\mathcal{M},p,q}$ is an extension of $X_p \times X_q$. Extend this further,
  by allowing each cross in $X_q$ to contain an extra variable $\alpha \in \{0,1\}$.
  There will be local restrictions which force all $\alpha$ variables corresponding to crosses of the same level to have the same value.
  As in the example of \ref{subsec:frac_entrop_dim}, allow nodes and arrows of the $X_q$ layer to be blue, enforcing the restriction that blue
  arrow head must meet blue arrow tail or a blue perpendicular arrow.
  Also, impose the restriction that a node in $X_q$ can be blue only if both its horizontal and vertical labels are even, or if its
  $\alpha$-variable is equal to $1$. The mechanism of lemma \ref{lem:SFT_Turing_boards} will be used to enforce the condition that the
  $\alpha$-variable is can be set to $1$ only in nodes of levels $n$ for which $a_n=1$. This is implemented as follows:

   Let $\mathcal{R}$ be a Turing machine which performs the following:
   First $\mathcal{R}$ evokes the following procedure, which is intended to compute the level $k$ corresponding to the input:

   Input: $(x_n)_{n \in \mathbb{N}}$
\begin{enumerate}
  \item{If $x_1$ is not a cross, run indefinitely.}

  \item{Otherwise, find the first $m >1 $ such that the horizontal marking of $x_{m+1}$ is not equal to the horizontal marking of $x_{1}$. Set
  $k= \lceil m \frac{\log p}{\log q} \rceil$.}
\end{enumerate}
This procedure indeed finds the desired level $k$, since the
condition on $m$ implies that
$$ p^m < q^k \le p^{m+1},$$
and so
$$ m \log p < k \log q \le (m+1)\log p .$$

Next, $\mathcal{R}$ tries to read the value of the variable $\alpha$
from the input. It is possible if the time-space diagram is on a
board in $X_p$ which is aligned with the correct parity of the $X_2$
layer.
  Then, if $\alpha=0$, $\mathcal{R}$ runs indefinitely. Otherwise,
  the machine $\mathcal{R}$ loops over $j=1,2,\ldots$ and terminate iff
  $g(k,j)=0$ for some $j \in \mathbb{N}$.
  Clearly, for a cross in level $n$, $\alpha$ can be equal to $1$ only if $a_n=1$, and there exists a configuration for which indeed $\alpha=1$
  for every cross of level $n$ with $n=1$.
\end{proof}

To complete the proof of theorem \ref{thm:entrop_dim_char}, extend
the SFT  $X_g$ to an SFT $Y_g$ as follows: Each blue cross an
independent head/tail marking. By following the analysis of the
example in \ref{subsec:frac_entrop_dim}, we conclude that
$$\overline{D}(Y_g) = \limsup_{n \to \infty}\frac{2}{n}\sum_{k=1}^n a_n,$$
and
$$\underline{D}(Y_g) = \liminf_{n \to \infty}\frac{2}{n}\sum_{k=1}^n a_n$$
\section{\label{sec:more} Further remarks on growth-type invariants}

\subsection{Growth-type invariants for topological dynamical systems}
The definition of entropy-dimension given in the beginning of this
paper is a particular case of a definition given by de Carvalho in
\cite{carvalho_97}, which applies to a general action of
$\ZD$-topological dynamical systems:

Let $(X,T)$ be a $\ZD$-dynamical system, e.i $X$ is a compact
topological space, $T_n:X \to X$ is a homeomorphism for each $n \in
\ZD$, so that  $T_n\circ T_m=T_{n+m}$ for $n,m \in \ZD$. For an open
cover $\alpha$, denote by $N(\alpha)$ the cardinality of the
smallest sub-covering of $\alpha$. The (upper) entropy dimension of
the system is defined by:
$$\overline{D}(X,T)=
\inf\left\{ s >0:~ \sup_{\alpha}\limsup_{n \to \infty}\frac{1}{n^s}
\log N(\bigvee_{k \in Q_n}T_n\alpha) =0 \right\}$$

Where $Q_n =\{1,\ldots,n\}^d$, and $\alpha$ ranges over open covers
of $X$.

In case $X \subset S^\ZD$ is a subshift and $\{T_n :~ n \in\ZD\}$
are the shift maps, the open covering of $X$ by cylinders $\alpha_0=
\{ [s]_0 :~ s \in S\}$ satisfies
$$\limsup_{n \to \infty}\frac{1}{n^s}
\log N(\bigvee_{k \in Q_n}T_n\alpha_0) \ge \limsup_{n \to
\infty}\frac{1}{n^s} \log N(\bigvee_{k \in Q_n}T_n\alpha)$$ for
every open cover $\alpha$. Thus the $\sup$ over $\alpha$'s can be
replaced by taking $\alpha_0$.

Now note that for any positive sequence $\{a_n\}_{n \ge 0}$, the
following holds:
  $$\limsup \frac{\log a_n}{\log n} = \inf_{\beta \ge 0}\left\{\beta \ge 0: \limsup \frac{a_n}{n^\beta} > 0\right\}$$

The equivalence of the definitions follows from this.

In a similar manner, it is possible to define polynomial growth-type
and other growth-type invariants for general dynamical systems.

\subsection{Measure-theoretic entropy-dimensions and growth-type invariants}

Measure-theoretic counterparts of growth-type invariants such as
entropy-dimension have been studied by various authors
\cite{blume_97,ferenczi_park_07,katok_thouvenot_97}. In
\cite{carvalho_97} it was shown that for any invariant measure on a
topological system the measure-theoretic entropy-dimension is
bounded from above by the topological entropy-dimension. Answering a
question appearing in \cite{carvalho_97}, Ahn Dou and Park
\cite{ahn_dou_park}  constructed topological systems with positive
topological entropy dimension and zero measure-theoretic entropy
dimension with respect to any invariant measure. We remark that for
the SFTs constructed in the previous section the measure-theoretic
entropy dimension is $0$ for any shift-invariant probability
measure: For any non-trivial $g$,  the density of the blue nodes is
$0$ for any point $x \in X_g$. It follows that with respect to any
invariant probability measure there are (almost) no blue nodes.
Thus, the support of any invariant measure is a subshift with
polynomial growth, hence the entropy dimension of these subshifts is
$0$.

\subsection{Growth-type invariants for effectively closed shifts}

A subshift $X \subset S^\ZD$ is said to be $\Pi_1^0$ if it is
effectively closed - $X$ is the complement of the union of a
recursive sequence of basic open neighborhoods. As observed in
\cite{simpson}, any SFT is a $\Pi_1^0$ subshift. The same argument
we gave in section \ref{sec:entropy_dim_chacterization} shows that
for any $\Pi_1^0$ subshift the upper and lower entropy dimensions
are $\Pi_3$ and $\Sigma_2$ respectively, and the same holds for the
polynomial growth types.

\subsection{Growth-types for Subactions of SFTs}
For any $\ZD$ dynamical system  a \emph{subaction} is an action of
some subgroup of $\ZD$. It is an immediate consequence of the
definition that the entropy dimension of a system is greater or
equal then the entropy dimension of any subaction.

For algebraic $\ZD$-actions, the entropy dimension coincides with
the \emph{entropy rank} - the maximal rank of a subaction with
positive entropy. In particular, algebraic SFTs always have integer
entropy dimension, which is determined by the entropies of
subactions.  For algebraic SFTs, the entropy dimension  is equal to
the Krull dimension of the associated ring. It is also equal to the
unique $k \le d$ such that there is a $k$-dimensional sublattice of
$\ZD$ for which the action has  positive, finite $k$-dimensional
entropy. See \cite{ELMW2001} and related references for details.

For general (non-algebraic) SFTs The presence of a $k$-dimensional
subaction with positive,finite entropy does not imply that the
entropy dimension is $k$: In \cite{TM_CA} it was shown that a
certain endomorphism of the $\ZZ^2$-full-shift admits positive,
finite entropy. The $\ZZ^3$-subshift obtained from all time-space
diagrams of this endomorphism has a $1$-dimensional subaction with
finite positive entropy, yet the entropy rank and the entropy
dimension are $2$.

 In \cite{hochman1} it was shown that any
$\Pi_1^0$-subshift is a isomorphic to a subaction of some
higher-dimensional Sofic shift (a factor of an SFT). This result
gives useful information on the possibilities of  growth-type
invariants for subactions of Sofic shifts.

\subsection{Polynomial growth types of SFTs}
Since

$$\overline{P}(X)= \limsup_{k \to \infty} \inf_j \frac{\log N_{k,j}(X)}{\log k},$$
and
$$\underline{P}(X)= \liminf_{k \to \infty} \inf_j  \frac{\log N_{k,j}(X)}{\log k},$$
with $N_{k,j}(X)$ defined as in section
\ref{sec:entropy_dim_chacterization}, it follows that the upper and
lower polynomial growth types are necessarily $\Pi_3$ and $\Sigma_2$
respectively. Also since $N_k(X)$ is a strictly increasing sequence
of integers for any infinite subshift, any infinite subshift has
polynomial growth order at least $1$. The class of examples in
\ref{subsec:example_polynomial_growth} shows that the possible
values of entropy dimensions are dense in $[2,\infty]$. By a trivial
extension of a $\ZZ$-SFT it is easy to construct a $\ZD$-SFT  with
polynomial growth order $1$.

Using a variation of the construction in section
\ref{sec:entropy_dim_chacterization} combined with the idea of the
example in \ref{subsec:example_polynomial_growth}, we can prove that
any  $x \in [4,\infty]$ which is $\Pi_3$ (or $\Sigma_2$) can be
obtained as an upper (or lower) entropy dimension for some $\ZZ^2$
SFT.

%

 To find SFTs with polynomial growth rate in $(1,2)$ seems to require different methods
then these in the current paper, since the $p$-adic SFTs themselves
have polynomial growth type $2$.


\subsection{Number of Periodic Points for SFTs}
Let $X$ be a $\ZD$ subshift. For any $d$-dimensional lattice $L
\subset \ZD$, let $P_L(X)$ denote the number of fixed points of $X$
under $L$. Unlike $N_k(X)$, this is indeed an invariant of $X$.
Furthermore, it is a computable invariant: There exists an algorithm
which given a set of tiles and a lattice $L$ calculates $P_L(X)$ for
the SFT associated with these tiles. A naive algorithm of this type
will have running time which is exponential in $|\det L|$.

For $k \in \mathbb{N}$ set $P_k(X)=P_{k\ZD}(X)$.

In dimension $1$, $P_k(X)=\mathit{trace}(A^k)$ for a matrix $A$
associated with $X$. In dimension $d >1$, it is not likely to have a
simple formula for $P_k(X)$.

It should be possible to prove a lower-bound for the computation
time of $P_k(X)$ which is exponential in $\log k$:

The problem of determining if a given Turing machine runs into a
loop of period $k$ has a lower bound which is exponential in $\log
k$. Using SFT representation for \TSD~ of Turing machines, one can
reduce the loop-problem for a Turing machine into counting
$L$-periodic points of certain SFTs.

A remarkable paper \cite{kim_ornes_roush2000} of Kim, Ormes and
Roush gives a simple necessary and sufficient conditions on an
$n$-tuple of complex numbers to be the non-zero spectra of a matrix
representing an SFT $X$. This gives a relatively simple
characterization of the possible sequences $\{P_k(X)\}$ for
one-dimensional SFTs.

It is interesting to understand the possible asymptotics for
$\{P_k(X)\}$ when $X$ is an arbitrary multidimensional SFT.

\bibliographystyle{abbrv}
\bibliography{entropy_dim}
\end{document}